\newtheorem{theorem}{Theorem}
\newtheorem{corollary}{Corollary}
\newtheorem{lemma}{Lemma}
\newtheorem{definition}{Definition}
\newtheorem{property}{Property}
\newtheorem{remark}{Remark}
\newcommand{\R}{\mathbb{R}}
\newcommand{\C}{\mathbb{C}}
\title{Optimal unit triangular factorization of symplectic matrices}
\author[1]{Pengzhan Jin}
\author[2,3]{Zhangli Lin}
\author[4,5]{Bo Xiao}
\affil[1]{School of Mathematical Sciences, Peking University, Beijing 100871, China.}
\affil[2]{Department of Mathematics, Yau Mathematical Sciences Center, Tsinghua University, Beijing 100084, China.}
\affil[3]{Yanqi Lake Beijing Institute of Mathematical Sciences and Applications, Beijing 101408, China.}
\affil[4]{Academy of Mathematics and Systems Science, Chinese Academy of Sciences, Beijing 100190, China.}
\affil[5]{School of Mathematical Sciences, University of Chinese Academy of Sciences, Beijing 100049, China.}
\date{}
\begin{document}

\maketitle
\begin{abstract}
We prove that any symplectic matrix can be factored into no more than 5 unit triangular symplectic matrices, moreover, 5 is the optimal number. This result improves the existing triangular factorization of symplectic matrices which gives proof of 9 factors. We also show the corresponding improved conclusions for structured subsets of symplectic matrices. This factorization further provides an unconstrained optimization method on $2d$-by-$2d$ real symplectic group (a $2d^2+d$-dimensional Lie group) with $2d^2+3d$ parameters.
\end{abstract}

\section{Introduction}
Consider matrices over field $F$, and denote the $d$-by-$d$ identity matrix by $I_{d}$, let
\begin{equation*}
J\coloneqq\begin{bmatrix} 0 & I_{d} \\ -I_{d} & 0 \end{bmatrix},
\end{equation*}
which satisfies $J^{-1}=J^{T}=-J$.
\begin{definition} \label{def:SP}
A matrix $H\in F^{2d\times 2d}$ is called symplectic if $H^{T}JH=J$.
\end{definition}
We denote the collection of symplectic matrices by
\begin{equation*}
Sp(d,F)=\{H\in F^{2d\times 2d}|H^{T}JH=J\},
\end{equation*}
which forms a group, i.e., matrix symplectic group. For convenience, we also denote $\mathcal{SP}=Sp(d,F)$ when there are no arguments over $d$ and $F$. It should be noted that there is a special case for complex entries.
\begin{definition}
A matrix $H\in\C^{2d\times 2d}$ is called conjugate symplectic if $H^*JH=J$.
\end{definition}
Accordingly, we denote the collection of complex conjugate symplectic matrices by
\begin{equation*}
\mathcal{SP}^*=\{H\in \C^{2d\times 2d}|H^{*}JH=J\}.
\end{equation*}
Since the complex conjugate symplectic group is not an algebraic group over $\C$ , so the behavior of conjugate symplectic matrices is quite different from general symplectic matrices, which is always an algebraic group, thus in this work we mainly discuss general symplectic matrices. The group $\mathcal{SP}$ is important from both the pure mathematical point of view \cite{fomenko1995symplectic}, and the point of view of applications. For instance, it 
appears in classical mechanics and Hamiltonian dynamical systems \cite{abraham1978foundations,arnol2013mathematical,arnold2007mathematical}. The symplectic matrices also arise in the symplectic integrators which are essential for physical problems preserving the symplectic structure \cite{feng2010symplectic,hairer2006geometric,sanz2018numerical}. Moreover, it is also applied to linear control \cite{fassbender2007symplectic,mehrmann1991autonomous}, optimal control \cite{hassibi1999indefinite}, the theory of parametric resonance \cite{iakubovich1975linear}, as well as machine learning \cite{bondesan2019learning,jin2020sympnets}. The applications motivate the development of factorizations of symplectic matrices, such as the Iwasawa decomposition \cite{benzi2007iwasawa}, the QR-like factorization \cite{bunse1986matrix}, the polar factorization \cite{higham2006symmetric}, the SVD-like factorization \cite{xu2003svd}, and the transvections factorization \cite{flaschka1991analysis,mackey2003determinant}.

The most elementary decomposition units we here care about are the unit triangular symplectic matrices:
\begin{equation*}
    \mathcal{T}=\left\{\begin{bmatrix}
    I & S \\ 0 & I
    \end{bmatrix}\ {\rm or}\ 
    \begin{bmatrix}
    I & 0 \\ S & I 
    \end{bmatrix}\Bigg|S^T=S,S\in F^{d\times d}\right\},
\end{equation*}
denote
\begin{equation*}
    \mathcal{L}_{n}=\left\{H_n\cdots H_2H_1|H_i\in \mathcal{T},i=1,2,\cdots,n\right\}.
\end{equation*}
It is clear that $\mathcal{L}_{m}\subset \mathcal{L}_{n}\subset \mathcal{SP}$ for all integers $1\leq m\leq n$. \cite{kopeiko1978stabilization} has pointed out that $\bigcup_n\mathcal{L}_n=\mathcal{SP}$, i.e., any symplectic matrix can be written as the product of some unit triangular symplectic matrices. The current unit triangular factorizaiton of symplectic matrices with a finite upper bound is shown in \cite{jin2020unit} as follows.
\begin{theorem} \label{thm:fac_thm}
$\mathcal{SP}=\mathcal{L}_{9}$.
\end{theorem}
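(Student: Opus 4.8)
\noindent\textbf{Proof strategy (for $\mathcal{SP}=\mathcal{L}_9$).}
The plan is to organize the argument around the block structure $H=\begin{bmatrix}A&B\\C&D\end{bmatrix}$ of a symplectic matrix. Writing $L(S)=\begin{bmatrix}I&S\\0&I\end{bmatrix}$ and $M(S)=\begin{bmatrix}I&0\\S&I\end{bmatrix}$ for $S=S^T$ (so $\mathcal{T}=\{L(S)\}\cup\{M(S)\}$), and using the relations forced by $H^TJH=J$ and $HJH^T=J$ --- that $A^TC,\ B^TD,\ AB^T,\ CD^T$ are symmetric and $A^TD-C^TB=I$ --- the proof splits into three parts: (i) reduce to the case $A$ invertible by one right multiplication from $\mathcal{T}$; (ii) when $A$ is invertible, extract a block ``$LDU$'' factorization leaving only a symplectic dilation $\mathrm{diag}(A,A^{-T})$; (iii) factor $\mathrm{diag}(A,A^{-T})$ into a bounded number of elements of $\mathcal{T}$.

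For (iii), I claim $\mathrm{diag}(A,A^{-T})\in\mathcal{L}_4$ for every invertible $A$. By the Taussky--Zassenhaus theorem there is a symmetric invertible $R$ with $R^{-1}AR=A^T$, equivalently $AR=RA^T$. Put $S_1=R$, $S_2=R^{-1}(A-I)$, $S_3=-A^{-1}R$, $S_4=-A^TR^{-1}(A-I)$. Using $AR=RA^T$ one checks that each $S_i$ is symmetric, and multiplying out the blocks gives $L(S_1)M(S_2)L(S_3)M(S_4)=\mathrm{diag}(A,A^{-T})$; the only nontrivial ingredient is the existence of $R$. (An inspection of the forced-zero blocks shows three factors never suffice, so four is optimal at this step.)

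For (ii), if $A$ is invertible the symplectic relations give $CA^{-1}=(CA^{-1})^T$, $A^{-1}B=(A^{-1}B)^T$ and $D=A^{-T}+CA^{-1}B$, hence
$H=M(CA^{-1})\,\mathrm{diag}(A,A^{-T})\,L(A^{-1}B)\in\mathcal{L}_6.$
For (i), take an arbitrary symplectic $H$; the block row $[A\ B]$ has rank $d$ (the top $d$ rows of the invertible matrix $H$). I will show there is a symmetric $S$ with $A+BS$ invertible. In the Lagrangian picture, $A+BS$ is invertible exactly when the graph Lagrangian $\{(x,Sx):x\in F^d\}$ is transverse to $N:=H^{-1}(\{0\}\times F^d)$; since the graph Lagrangians are precisely the Lagrangian complements of $\{0\}\times F^d$, and any two Lagrangian subspaces have a common complementary Lagrangian (immediate over an infinite field from irreducibility of the Lagrangian Grassmannian, with a separate check over finite fields), such an $S$ exists. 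Then $HM(S)$ has invertible $(1,1)$ block, so $HM(S)\in\mathcal{L}_6$ by (ii), and $H=\bigl(HM(S)\bigr)M(-S)\in\mathcal{L}_7\subseteq\mathcal{L}_9$. Together with the trivial $\mathcal{L}_9\subseteq\mathcal{SP}$ this gives $\mathcal{SP}=\mathcal{L}_9$ --- in fact $\mathcal{SP}=\mathcal{L}_7$ by this route, and determining the optimal constant is exactly what the present paper does.

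The step I expect to be the main obstacle is (i): guaranteeing that a single $\mathcal{T}$-multiplication makes the $(1,1)$ block invertible, uniformly over the ground field. The genericity argument is transparent over infinite fields, but a field-independent statement relies on the common-complementary-Lagrangian fact, which over small finite fields must be verified directly; if one prefers to avoid this, inserting bounded copies of $J=L(I)M(-I)L(I)\in\mathcal{L}_3$ still yields the bound $9$. Steps (ii) and (iii) are then routine: (ii) is bookkeeping with the symplectic relations, and (iii) is a direct matrix identity once the symmetric intertwiner $R$ is produced.
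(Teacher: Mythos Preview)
Your argument is correct and in fact yields $\mathcal{SP}=\mathcal{L}_7$, which is stronger than the stated $\mathcal{L}_9$. Note, however, that the present paper does not prove Theorem~\ref{thm:fac_thm} at all---it is quoted from \cite{jin2020unit}---and instead proves the sharper $\mathcal{SP}=\mathcal{L}_5$ (Theorem~\ref{thm:utf}), so the natural comparison is with that argument.

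The two proofs share the same three ingredients: (a) a single $\mathcal{T}$-multiplication to make the $(1,1)$ block invertible, (b) the block LDU factorization (Property~\ref{pro:LDU}), and (c) the Frobenius/Taussky fact that every square matrix is a product of two symmetric matrices (equivalently, is similar to its transpose via a symmetric invertible $R$). The difference is purely in the ordering of factors in step (iii). Your four-term identity for ${\rm diag}(A,A^{-T})$ has the pattern $LMLM$, which cannot merge with the outer $M(\cdot)\cdots L(\cdot)$ coming from LDU, so you get $\mathcal{L}_6$ when $A$ is invertible and $\mathcal{L}_7$ in general. The paper's Lemma~\ref{lem:nonsin_utf} instead writes ${\rm diag}(P_1P_2,(P_1P_2)^{-T})$ in the pattern $MLML$ (with $P_1,P_2$ the two symmetric factors of $A$); the outer $M$ and $L$ then absorb into the ends, giving $\mathcal{L}_4$ for invertible $(1,1)$ block and $\mathcal{L}_5$ overall. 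Your formula can be converted to $MLML$ by conjugating with $J$ and replacing $A$ by $A^{-T}$, so you are one observation away from the optimal bound. For step (a), your common-Lagrangian-complement argument is valid over any field, though somewhat informal as written; the paper handles this (Theorems~\ref{thm:nonsin_thm} and~\ref{thm:nonsin_fac}) via the complementary bases theorem of Dopico--Johnson, which is constructive, field-independent, and pins the correcting factor down to a $0/1$ diagonal.
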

The above theorem indicates that any symplectic matrix can be written as the product of 9 unit triangular symplectic matrices. However, the number ``9'' is in fact not optimal. We will point out that any symplectic matrix can be factored into no more than 5 unit triangular symplectic matrices, and 5 is optimal. Below we only consider the case $d\geq 2$, since the result for case $d=1$ is trivial but slightly different. Especially, any symplectic matrix can be optimally factored into 4 unit triangular symplectic matrices for case $d=1$, one may easily verify it. 
\section{Preliminaries}
We first show some basic properties as well as recent new discoveries regarding to symplectic matrices. Property \ref{pro:smat_pro}-\ref{pro:LDU} are well-known and easy to verify, readers are referred to \cite{de2006symplectic,feng2010symplectic} for more details. Note that although Theorem \ref{thm:fac_thm}-\ref{thm:unit_ULU} are proved for $F=\R$ in \cite{jin2020unit}, in fact the proofs hold for any general field $F$. For convenience, we denote all the unimportant blocks in matrices by ``$\star$'' throughout this paper.
\begin{property} \label{pro:smat_pro}
If $H=\begin{bmatrix} A_{1} & B_{1} \\ A_{2} & B_{2} \end{bmatrix}\in F^{2d\times 2d}$ is a symplectic matrix and $A_{i},B_{i}\in F^{d\times d}$, then (i) $A_{1}^{T}A_{2}=A_{2}^{T}A_{1}$, (ii) $B_{1}^{T}B_{2}=B_{2}^{T}B_{1}$, (iii) $A_{1}^{T}B_{2}-A_{2}^{T}B_{1}=I$,
vice versa.
\end{property}

\begin{property} \label{pro:smat_cond}
If $S,P,Q\in F^{d\times d}$, then
\begin{enumerate}[(i)]
    \item The matrix $\begin{bmatrix} I & S \\ 0 & I \end{bmatrix}$ is symplectic if and only if $S^{T}=S$,
    \item The matrix $\begin{bmatrix} I & 0 \\ S & I \end{bmatrix}$ is symplectic if and only if $S^{T}=S$,
    \item The matrix $\begin{bmatrix} P & 0 \\ 0 & Q \end{bmatrix}$ is symplectic if and only if $Q=P^{-T}$.
\end{enumerate}
\end{property}
\begin{property}[LDU factorization] \label{pro:LDU}
If $H=\begin{bmatrix} A_{1} & B_{1} \\ A_{2} & B_{2} \end{bmatrix}\in F^{2d\times 2d}$ is a symplectic matrix and $A_{i},B_{i}\in F^{d\times d}$, moreover $A_{1}$ is nonsingular, then $H$ has three unique factorizations
\begin{equation} \label{eq:left_LDU}
\left\{
\begin{aligned}
&H=\begin{bmatrix} P_{1} & 0 \\ 0 & P_{1}^{-T} \end{bmatrix}\begin{bmatrix} I & 0 \\ S_{1} & I \end{bmatrix}\begin{bmatrix} I & T_{1} \\ 0 & I \end{bmatrix} \\
&S_{1}=A_{1}^{T}A_{2},\ T_{1}=A_{1}^{-1}B_{1},\ P_{1}=A_{1} \\
\end{aligned}
\right.,
\end{equation}
\begin{equation} \label{eq:center_LDU}
\left\{
\begin{aligned}
&H=\begin{bmatrix} I & 0 \\ S_{2} & I \end{bmatrix}\begin{bmatrix} P_{2} & 0 \\ 0 & P_{2}^{-T} \end{bmatrix}\begin{bmatrix} I & T_{2} \\ 0 & I \end{bmatrix} \\
&S_{2}=A_{2}A_{1}^{-1},\ T_{2}=A_{1}^{-1}B_{1},\ P_{2}=A_{1} \\
\end{aligned}
\right.,
\end{equation}
\begin{equation} \label{eq:right_LDU}
\left\{
\begin{aligned}
&H=\begin{bmatrix} I & 0 \\ S_{3} & I \end{bmatrix}\begin{bmatrix} I & T_{3} \\ 0 & I \end{bmatrix}\begin{bmatrix} P_{3} & 0 \\ 0 & P_{3}^{-T} \end{bmatrix} \\
&S_{3}=A_{2}A_{1}^{-1},\ T_{3}=B_{1}A_{1}^{T},\ P_{3}=A_{1} \\
\end{aligned}
\right.,
\end{equation}
where $S_{1},S_{2},S_{3},T_{1},T_{2},T_{3}$ are symmetric and $P_{1},P_{2},P_{3}$ are nonsingular.
\end{property}

\begin{theorem} \label{thm:nonsin_thm}
For any symplectic matrix $H\in F^{2d\times 2d}$, there exists a symmetric $S\in F^{d\times d}$ such that, the factorization
\begin{equation*}
    H=\begin{bmatrix} I & \lambda S \\ 0 & I \end{bmatrix}\begin{bmatrix} P_{\lambda} & \star \\ \star & \star \end{bmatrix}
\end{equation*}
holds with a nonsingular $P_{\lambda}$ for all $\lambda\neq 0$. Hence any symplectic matrix can be decomposed into a unit upper triangular symplectic matrix and a symplectic matrix with nonsingular left upper block. Furthermore, if needed, $S$ can be set to
\begin{equation*}
S=P\begin{bmatrix} O_{r} & 0 \\ 0 & I_{d-r} \end{bmatrix}P^{T}
\end{equation*}
when the left upper block $A_{1}$ of $H$ with $rank\ r$ is decomposed as
\begin{equation*}
A_{1}=P\begin{bmatrix} I_{r} & 0 \\ 0 & O_{d-r} \end{bmatrix}Q
\end{equation*}
where $P,Q\in F^{d\times d}$ are nonsingular.
\end{theorem}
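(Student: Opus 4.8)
The plan is to establish the factorization by reducing to the LDU factorization of Property \ref{pro:LDU}, which requires the left upper block to be nonsingular. The key observation is that left-multiplying $H$ by a unit upper triangular symplectic matrix $\begin{bmatrix} I & \lambda S \\ 0 & I \end{bmatrix}$ transforms the left column blocks: if $H=\begin{bmatrix} A_{1} & B_{1} \\ A_{2} & B_{2} \end{bmatrix}$, then the product has left upper block $A_{1}+\lambda S A_{2}$. So I would first reduce to finding a symmetric $S$ making $A_{1}+\lambda S A_{2}$ nonsingular for all $\lambda\neq 0$, or equivalently (after clearing the trivial $\lambda=1$ case by scaling) making $P_{\lambda}\coloneqq A_{1}+\lambda S A_{2}$ have determinant a nonzero polynomial in $\lambda$ with no nonzero roots — but actually the cleanest route is to exhibit an $S$ for which $\det(A_{1}+\lambda S A_{2})$ is, up to a nonzero constant, a power of $\lambda$, so that it vanishes only at $\lambda=0$.

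First I would handle the rank decomposition: write $A_{1}=P\begin{bmatrix} I_{r} & 0 \\ 0 & O_{d-r}\end{bmatrix}Q$ with $P,Q$ nonsingular, where $r=\operatorname{rank} A_{1}$. Set $S=P\begin{bmatrix} O_{r} & 0 \\ 0 & I_{d-r}\end{bmatrix}P^{T}$ as in the statement; this is symmetric. The next step is to compute $A_{1}+\lambda S A_{2}$ and show it is nonsingular. Here I would use Property \ref{pro:smat_pro}(i), $A_{1}^{T}A_{2}=A_{2}^{T}A_{1}$, together with the fact that the columns of $\begin{bmatrix} A_{1} \\ A_{2}\end{bmatrix}$ are linearly independent (since $H$ is invertible), to control the interaction of $A_{2}$ with the kernel of $A_{1}$. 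Concretely, after conjugating/changing basis by $P$ on the left and examining blocks, I expect to see that on the subspace where $A_{1}$ is rank-deficient, the term $\lambda S A_{2}$ supplies exactly the missing rank, because the relation $A_{1}^{T}A_{2}=A_{2}^{T}A_{1}$ forces the relevant sub-block of $A_{2}$ to be symmetric and invertible on that complementary subspace. Writing everything in the $P$-coordinates, $\det(A_{1}+\lambda S A_{2})$ should come out to $\det(Q)$ (or $\det P \det Q$) times $\lambda^{d-r}$ times a nonzero constant coming from that invertible sub-block, hence nonzero for every $\lambda\neq 0$.

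Then I would assemble the two conclusions. For the main statement, take any $\lambda\neq 0$: with $P_{\lambda}=A_{1}+\lambda S A_{2}$ nonsingular, the factorization $H=\begin{bmatrix} I & \lambda S \\ 0 & I\end{bmatrix}\begin{bmatrix} P_{\lambda} & \star \\ \star & \star\end{bmatrix}$ follows just by multiplying out, and the second factor is symplectic since it is a product of two symplectic matrices ($H$ and the inverse of a unit upper triangular symplectic matrix). The "furthermore" part is exactly the explicit choice of $S$ verified above. The main obstacle will be the block computation showing $\det(A_{1}+\lambda S A_{2})\neq 0$: one has to carefully track how $A_{2}$ behaves relative to $\ker A_{1}$ and use the symplectic constraints to guarantee the complementary block of $A_{2}$ is invertible. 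A convenient trick is to work with the single value $\lambda=1$ after a preliminary scaling $S\mapsto \lambda S$ (which stays symmetric), so it suffices to prove nonsingularity of $A_{1}+SA_{2}$ for the one specific $S$ above, and then reintroduce $\lambda$ by homogeneity at the end, noting $\det(A_{1}+\lambda S A_{2})$ is, in the right coordinates, a monomial $c\lambda^{d-r}$ with $c\neq 0$.
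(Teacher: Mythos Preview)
The paper does not prove this theorem; it is quoted in the Preliminaries as a known result from \cite{jin2020unit}, so there is no in-paper proof to compare against.

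Your outline is correct and is essentially the standard argument. Two small corrections worth noting. First, a sign: in the factorization $H=\begin{bmatrix} I & \lambda S \\ 0 & I\end{bmatrix}\begin{bmatrix} P_\lambda & \star \\ \star & \star\end{bmatrix}$ the second factor is $\begin{bmatrix} I & -\lambda S \\ 0 & I\end{bmatrix}H$, so $P_\lambda=A_1-\lambda S A_2$, not $A_1+\lambda S A_2$. This is harmless here since the claim is for all $\lambda\neq 0$ and the specific $S$ is unaffected by the sign. Second, the division of labour between the two symplectic facts is slightly different from what you wrote. Setting $M=P^{T}A_2Q^{-1}=\begin{bmatrix} M_{11} & M_{12} \\ M_{21} & M_{22}\end{bmatrix}$, the relation $A_1^{T}A_2=A_2^{T}A_1$ forces $M_{11}=M_{11}^{T}$ and $M_{12}=0$, but says nothing about $M_{22}$; it is then the full column rank of $\begin{bmatrix} A_1 \\ A_2\end{bmatrix}$ (equivalently of $\begin{bmatrix} I_r & 0 \\ 0 & 0 \\ M_{11} & 0 \\ M_{21} & M_{22}\end{bmatrix}$) that forces $M_{22}$ to be nonsingular, not symmetric. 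With $M_{12}=0$ one gets
\[
P^{-1}(A_1-\lambda S A_2)Q^{-1}=\begin{bmatrix} I_r & 0 \\ -\lambda M_{21} & -\lambda M_{22}\end{bmatrix},
\]
whose determinant is $(-\lambda)^{d-r}\det M_{22}$, exactly the nonzero monomial in $\lambda$ you predicted.
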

\begin{theorem}[unit ULU factorization] \label{thm:unit_ULU}
For any symplectic matrix $H\in F^{2d\times 2d}$, there exist symmetric $S,T,U\in F^{d\times d}$ and a nonsingular $P\in F^{d\times d}$ such that
\begin{equation*}
H=\begin{bmatrix} I & S \\ 0 & I \end{bmatrix}\begin{bmatrix}P & \star \\ \star & \star \end{bmatrix}=\begin{bmatrix} I & S \\ 0 & I \end{bmatrix}\begin{bmatrix} I & 0 \\ T & I \end{bmatrix}\begin{bmatrix} I & U \\ 0 & I \end{bmatrix}\begin{bmatrix} P & 0 \\ 0 & P^{-T} \end{bmatrix}.
\end{equation*}
Furthermore, $T,U,P$ are uniquely determined by $H$ and $S$.
\end{theorem}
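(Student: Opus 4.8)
The plan is to assemble the factorization from two results already in hand: Theorem~\ref{thm:nonsin_thm}, which uses a single unit upper triangular symplectic factor to make the left upper block nonsingular, and the \emph{right} LDU factorization \eqref{eq:right_LDU} of Property~\ref{pro:LDU}, which deals with a symplectic matrix whose left upper block is already nonsingular. Write $H=\begin{bmatrix}A_{1} & B_{1}\\ A_{2} & B_{2}\end{bmatrix}$.

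First I would apply Theorem~\ref{thm:nonsin_thm} with $\lambda=1$ to obtain a symmetric $S\in F^{d\times d}$ for which $G\coloneqq\begin{bmatrix}I & -S\\ 0 & I\end{bmatrix}H=\begin{bmatrix}A_{1}-SA_{2} & B_{1}-SB_{2}\\ A_{2} & B_{2}\end{bmatrix}$ has nonsingular left upper block $A_{1}'\coloneqq A_{1}-SA_{2}$; then $H=\begin{bmatrix}I & S\\ 0 & I\end{bmatrix}G$, and $G\in\mathcal{SP}$ because $\mathcal{T}\subset\mathcal{SP}$ and $\mathcal{SP}$ is a group. Applying \eqref{eq:right_LDU} to $G$ then yields $G=\begin{bmatrix}I & 0\\ T & I\end{bmatrix}\begin{bmatrix}I & U\\ 0 & I\end{bmatrix}\begin{bmatrix}P & 0\\ 0 & P^{-T}\end{bmatrix}$ with $T=A_{2}(A_{1}')^{-1}$, $U=(B_{1}-SB_{2})(A_{1}')^{T}$, $P=A_{1}'$, where Property~\ref{pro:LDU} already supplies the symmetry of $T,U$ and the nonsingularity of $P$ demanded by the theorem. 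Substituting into $H=\begin{bmatrix}I & S\\ 0 & I\end{bmatrix}G$ gives the second displayed identity; the first one follows at once, since multiplying out $\begin{bmatrix}I & 0\\ T & I\end{bmatrix}\begin{bmatrix}I & U\\ 0 & I\end{bmatrix}\begin{bmatrix}P & 0\\ 0 & P^{-T}\end{bmatrix}$ shows its $(1,1)$ block is $P$.

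For the uniqueness clause, note that once $H$ and an admissible $S$ are fixed, $G=\begin{bmatrix}I & -S\\ 0 & I\end{bmatrix}H$ is determined. Any triple $(T,U,P)$ realizing the second identity must satisfy $G=\begin{bmatrix}I & 0\\ T & I\end{bmatrix}\begin{bmatrix}I & U\\ 0 & I\end{bmatrix}\begin{bmatrix}P & 0\\ 0 & P^{-T}\end{bmatrix}$, whose $(1,1)$ block is $P$; hence $P$ equals the (nonsingular) left upper block of $G$, and the displayed product is a factorization of the form \eqref{eq:right_LDU}, so the uniqueness part of Property~\ref{pro:LDU} forces $T,U,P$ to be exactly those found above. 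I do not expect a genuine obstacle here: all the substance is carried by Theorem~\ref{thm:nonsin_thm} and Property~\ref{pro:LDU}, and what remains are the routine block products that read off the $(1,1)$ block. The only point worth a sentence of care is that the uniqueness statement is conditional on $S$ being admissible — i.e. on $A_{1}-SA_{2}$ being invertible — which is precisely what legitimizes the LDU step.
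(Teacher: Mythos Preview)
Your proof is correct and is exactly the natural assembly the paper intends: peel off a unit upper triangular factor via Theorem~\ref{thm:nonsin_thm} to make the left upper block nonsingular, then apply the right LDU factorization \eqref{eq:right_LDU}, with uniqueness inherited from Property~\ref{pro:LDU}. Note that the present paper does not re-prove Theorem~\ref{thm:unit_ULU}; it quotes it from \cite{jin2020unit} (remarking that the proof there carries over to a general field $F$), and the argument in that reference proceeds in the same two steps you give.
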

All of the above results still hold for conjugate symplectic matrices as long as we replace the transpose $T$ by conjugate transpose $*$. 

\section{Main results}
We begin with deducing a similar result to Theorem \ref{thm:nonsin_thm}, which transforms a symplectic matrix into one having nonsingular left upper block by a very simple symplectic matrix.
\begin{theorem}\label{thm:nonsin_fac}
For any symplectic matrix $H\in F^{2d\times 2d}$, there exist $\delta_1,\cdots,\delta_d\in\{0,1\}$, and a nonsingular $P\in F^{d\times d}$ such that
\begin{equation*}
H=\begin{bmatrix} I & {\rm diag}(\delta_1,\cdots,\delta_d) \\ 0 & I \end{bmatrix}\begin{bmatrix}P & \star \\ \star & \star \end{bmatrix}.
\end{equation*}
\end{theorem}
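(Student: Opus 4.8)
The plan is to bring $H$ to a matrix with nonsingular left upper block by premultiplying with $\begin{bmatrix} I & -D \\ 0 & I\end{bmatrix}$ for a suitable $0$--$1$ diagonal matrix $D=\mathrm{diag}(\delta_1,\dots,\delta_d)$. Writing $H=\begin{bmatrix} A_1 & B_1 \\ A_2 & B_2 \end{bmatrix}$, one has $\begin{bmatrix} I & -D \\ 0 & I\end{bmatrix}H=\begin{bmatrix} A_1-DA_2 & \star \\ A_2 & \star\end{bmatrix}$, so it suffices to produce $\delta\in\{0,1\}^d$ with $P:=A_1-DA_2$ nonsingular; then $H=\begin{bmatrix} I & D \\ 0 & I\end{bmatrix}\begin{bmatrix} P & \star \\ \star & \star\end{bmatrix}$ is the asserted factorization ($\begin{bmatrix} I & D \\ 0 & I\end{bmatrix}$ is symplectic because $D$ is symmetric, and the second factor is then automatically symplectic).

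First I would regard $\det\!\big(A_1-\mathrm{diag}(t_1,\dots,t_d)A_2\big)$ as a polynomial in $t_1,\dots,t_d$. Since $t_i$ occurs only in its $i$-th row, this polynomial is multilinear, and expanding the determinant along rows shows that its coefficients are, up to sign, the determinants of the $d\times d$ matrices formed by choosing, for each index $i$, either the $i$-th row of $A_1$ or the $i$-th row of $A_2$. A multilinear polynomial vanishing at every point of $\{0,1\}^d$ is the zero polynomial (a short induction on $d$, or the Combinatorial Nullstellensatz), so it is enough to show that \emph{one} such choice of rows yields a nonsingular matrix. This is exactly the existence of an independent transversal, and by Rado's theorem --- equivalently, matroid intersection between the linear matroid on the $2d$ rows of $\begin{bmatrix} A_1 \\ A_2\end{bmatrix}$ and the partition matroid given by the $d$ pairs (the $i$-th rows of $A_1$ and of $A_2$) --- such a choice exists provided that for every $I\subseteq\{1,\dots,d\}$ the rows $\{(A_1)_{i,:},(A_2)_{i,:}:i\in I\}$ have rank at least $|I|$.

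The crux is therefore this rank inequality, and it is where symplecticity enters. For $I\subseteq\{1,\dots,d\}$ put $N_I=\begin{bmatrix} (A_1)_{I,:} \\ (A_2)_{I,:}\end{bmatrix}$ and $K_I=\begin{bmatrix} (B_1)_{I,:} \\ (B_2)_{I,:}\end{bmatrix}$, both of size $2|I|\times d$. Restricting the identity $HJH^{T}=J$ (valid for any symplectic $H$) to the rows and columns indexed by $I\cup\{d+i:i\in I\}$, and reordering so that the $I$-indices come first, gives $[\,N_I\mid K_I\,]\,J\,[\,N_I\mid K_I\,]^{T}=J_{|I|}$, that is,
\begin{equation*}
N_I K_I^{T}-K_I N_I^{T}=J_{|I|},
\end{equation*}
where $J_{|I|}$ is the $2|I|\times 2|I|$ standard skew matrix. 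The left-hand side is the difference of $N_I K_I^{T}$ and its transpose, hence has rank at most $2\,\mathrm{rank}(N_I)$, whereas the right-hand side has rank $2|I|$; therefore $\mathrm{rank}(N_I)\ge|I|$, which closes the chain of reductions. I expect this rank estimate to be the main obstacle: the combinatorial part (multilinearity plus independent transversals) is routine, but one must spot the right localization of symplecticity, namely that pairing each $A$-row with the matching $B$-row turns $HJH^{T}=J$ into the full-rank skew relation displayed above.
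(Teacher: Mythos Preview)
Your argument is correct, but it takes a genuinely different route from the paper. The paper quotes the complementary bases theorem of Dopico--Johnson: if $\{A_{i_1},\dots,A_{i_r}\}$ is a maximal independent set of rows of the upper-left block, then adjoining the complementary rows $\{C_j:j\notin\{i_1,\dots,i_r\}\}$ of the lower-left block already gives a basis of $F^d$. From this one checks in two lines that the \emph{specific} choice $\delta_j=0$ for $j\in\{i_1,\dots,i_r\}$ and $\delta_j=1$ otherwise makes $A-DC$ nonsingular; no polynomial or matroid machinery is needed, and the $\delta$'s are explicit once a row basis of $A$ is fixed. Your approach instead verifies the full Hall--Rado condition $\operatorname{rank}(N_I)\ge|I|$ for every $I$ via the neat identity $N_IK_I^{T}-K_IN_I^{T}=J_{|I|}$, then feeds this through Rado's theorem and the multilinearity trick to produce \emph{some} $\delta\in\{0,1\}^d$. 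The upside of your route is that it is entirely self-contained (your rank inequality essentially reproves the relevant content of the complementary bases theorem from $HJH^{T}=J$), and the inequality $\operatorname{rank}(N_I)\ge|I|$ is a clean standalone fact; the cost is more machinery and a non-constructive conclusion, since the multilinear step tells you only that the determinant does not vanish identically on $\{0,1\}^d$, not at which point.
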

\begin{proof}
Assume that $H=\begin{bmatrix} A & B \\ C & D\end{bmatrix}$, $A=[A_1^T,\cdots,A_d^T]^T$, $C=[C_1^T,\cdots,C_d^T]^T$ for $A_i,C_i\in F^{1\times d}$. Without loss of generality, suppose that $\{A_1,\cdots,A_r\}$ is a maximal linearly independent subset of $\{A_1,\cdots,A_d\}$, $r={\rm rank}(A)$. Consider $\{A_1,\cdots,A_r,A_{r+1}-C_{r+1},\cdots,A_d-C_d\}$, if 
\begin{equation*}
    \lambda_1A_1+\cdots+\lambda_rA_r+\lambda_{r+1}(A_{r+1}-C_{r+1})+\cdots+\lambda_d(A_d-C_d)=0
\end{equation*}
for $\lambda_i\in F$, then
\begin{equation*}
    \eta_1A_1+\cdots+\eta_rA_r-\lambda_{r+1}C_{r+1}-\cdots-\lambda_dC_d=0
\end{equation*}
for some $\eta_i\in F$. Since the complementary bases theorem \cite[Theorem 3.1]{dopico2006complementary} points out that $\{A_1,\cdots,A_r,C_{r+1},\cdots,C_d\}$ is linearly independent, we know $\eta_1=\cdots=\eta_r=\lambda_{r+1}=\cdots=\lambda_d=0$, thus $\lambda_1=\cdots=\lambda_d=0$, i.e., $\{A_1,\cdots,A_r,A_{r+1}-C_{r+1},\cdots,A_d-C_d\}$ is linearly independent. Consequently
\begin{equation*}
    H=\begin{bmatrix} I & {\rm diag}(\delta_1,\cdots,\delta_d) \\ 0 & I \end{bmatrix}\begin{bmatrix}P & \star \\ \star & \star \end{bmatrix}
\end{equation*}
is the desired decomposition, where $\delta_1=\cdots=\delta_r=0$, $\delta_{r+1}=\cdots=\delta_d=1$, $P=[A_1^T,\cdots,A_r^T,(A_{r+1}-C_{r+1})^T,\cdots,(A_d-C_d)^T]^T$ is nonsingular.
\end{proof}
This theorem also holds for conjugate symplectic case.

\subsection{Optimal unit triangular factorization}
\begin{lemma}\label{lem:nonsin_utf}
If $H=\begin{bmatrix}A & B \\ C & D\end{bmatrix}\in \mathcal{SP}$ and $A\in F^{d\times d}$ is nonsingular, then there exist symmetric $S,T,U,V\in F^{d\times d}$ such that
\begin{equation*}
    H=\begin{bmatrix} I & 0 \\ S & I \end{bmatrix}\begin{bmatrix} I & T \\ 0 & I \end{bmatrix}\begin{bmatrix} I & 0 \\ U & I \end{bmatrix}\begin{bmatrix} I & V \\ 0 & I \end{bmatrix}\in \mathcal{L}_4.
\end{equation*}
\end{lemma}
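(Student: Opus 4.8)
The plan is to use the LDU-type factorizations of Property~\ref{pro:LDU} and the unit ULU factorization of Theorem~\ref{thm:unit_ULU} to reduce everything to the problem of writing a single "diagonal" symplectic matrix $\begin{bmatrix} P & 0 \\ 0 & P^{-T}\end{bmatrix}$ as a product of unit triangular factors, and then to count carefully so that the total comes out to $4$. Since $A$ is nonsingular, Theorem~\ref{thm:unit_ULU} already gives
\[
H=\begin{bmatrix} I & S \\ 0 & I \end{bmatrix}\begin{bmatrix} I & 0 \\ T & I \end{bmatrix}\begin{bmatrix} I & U \\ 0 & I \end{bmatrix}\begin{bmatrix} P & 0 \\ 0 & P^{-T} \end{bmatrix},
\]
so if I could write $\begin{bmatrix} P & 0 \\ 0 & P^{-T}\end{bmatrix}\in\mathcal{L}_1$ I would be done with $4$ factors — but of course a diagonal symplectic matrix is generally not unit triangular, so this is too optimistic and I need to be cleverer about how I split $H$.

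The better route is to peel off factors from the left. First I would apply the left LDU factorization~\eqref{eq:left_LDU} to get $H=\begin{bmatrix} A & 0 \\ 0 & A^{-T}\end{bmatrix}\begin{bmatrix} I & 0 \\ S_1 & I\end{bmatrix}\begin{bmatrix} I & T_1 \\ 0 & I\end{bmatrix}$ with $S_1,T_1$ symmetric. This reduces the claim to showing that the diagonal symplectic matrix $\begin{bmatrix} A & 0 \\ 0 & A^{-T}\end{bmatrix}$ lies in $\mathcal{L}_2$ in the specific pattern "lower$\cdot$upper", i.e.\ $\begin{bmatrix} A & 0 \\ 0 & A^{-T}\end{bmatrix}=\begin{bmatrix} I & 0 \\ S & I\end{bmatrix}\begin{bmatrix} I & T \\ 0 & I\end{bmatrix}$ — which is false in general. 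So instead I would aim for the pattern $\begin{bmatrix} A & 0 \\ 0 & A^{-T}\end{bmatrix}=\begin{bmatrix} I & 0 \\ S & I\end{bmatrix}\begin{bmatrix} I & T \\ 0 & I\end{bmatrix}\begin{bmatrix} I & 0 \\ U' & I\end{bmatrix}$ (three factors, LUL), which would give $H\in\mathcal{L}_5$, and then use the freedom in the choice of LDU split to absorb one factor. The cleanest version: write $\begin{bmatrix} A & 0 \\ 0 & A^{-T}\end{bmatrix}$ itself via a ULU or LUL decomposition of length $3$ whose rightmost factor is a unit \emph{lower} triangular matrix; then multiplied against $\begin{bmatrix} I & 0 \\ S_1 & I\end{bmatrix}$ from~\eqref{eq:left_LDU} the two adjacent lower-triangular factors merge into one, netting $4$ total. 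The key identity I need is the classical fact that for nonsingular $A$,
\[
\begin{bmatrix} A & 0 \\ 0 & A^{-T} \end{bmatrix}
=\begin{bmatrix} I & 0 \\ A^{-T}-I & I\end{bmatrix}\begin{bmatrix} I & A-I \\ 0 & I\end{bmatrix}\begin{bmatrix} I & 0 \\ I-A^{T} & I\end{bmatrix}
\]
type relation — more precisely a relation of the form $\operatorname{diag}(A,A^{-T}) = L_1 U L_2$ with symmetric generators; I would verify the correct symmetric entries by direct block multiplication (the off-diagonal blocks that appear must be checked to be symmetric, which is where Property~\ref{pro:smat_cond} is used implicitly).

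The main obstacle is exactly this symmetry bookkeeping: the naive $2\times 2$ block identities that realize $\operatorname{diag}(A,A^{-T})$ as a product of three shear matrices involve blocks like $A-I$ or $A^{-1}-I$, which are symmetric only if $A$ is symmetric, so the identity in the displayed form above does not literally hold for general nonsingular $A$. To fix this I expect to need an extra conjugation or a smarter decomposition — e.g.\ first reduce $A$ to a symmetric (or even to the identity) times a unit triangular matrix, or use that $\operatorname{diag}(A, A^{-T})$ and $\operatorname{diag}(A^{-T}, A)$ are conjugate by $J$ — and to track how that extra structure merges with the neighboring factors $\begin{bmatrix} I & 0 \\ S_1 & I\end{bmatrix}$ and $\begin{bmatrix} I & T_1 \\ 0 & I\end{bmatrix}$ without spending an extra factor. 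Once the correct length-$3$ "LUL" (or "ULU") factorization of $\operatorname{diag}(A,A^{-T})$ with the right outermost type is in hand, combining it with~\eqref{eq:left_LDU} and collapsing the two adjacent same-type factors gives the stated $H\in\mathcal{L}_4$ in the order "lower, upper, lower, upper"; I would finish by matching this explicitly to the required form $\begin{bmatrix} I & 0 \\ S & I\end{bmatrix}\begin{bmatrix} I & T \\ 0 & I\end{bmatrix}\begin{bmatrix} I & 0 \\ U & I\end{bmatrix}\begin{bmatrix} I & V \\ 0 & I\end{bmatrix}$ and reading off $S,T,U,V$ in terms of $A,B,C,D$.
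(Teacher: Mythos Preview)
Your overall architecture is exactly right: start from an LDU factorization of $H$ (Property~\ref{pro:LDU}), reduce to factoring the diagonal block $\begin{bmatrix} A & 0 \\ 0 & A^{-T}\end{bmatrix}$, and arrange the pattern so that the outermost factors merge with the $L$ and $U$ coming from the LDU step. You also correctly diagnosed the obstacle: the naive three-shear identity for $\operatorname{diag}(A,A^{-T})$ produces blocks like $A-I$ that are symmetric only when $A$ is. What is missing is the actual fix, and neither of your proposed remedies (conjugation by $J$, or writing $A$ as a symmetric matrix times a unit triangular matrix) resolves the symmetry constraint.

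The key input the paper uses, which your plan does not mention, is a classical theorem of Frobenius--Taussky: over any field, every square matrix is a product of two symmetric matrices, and if the matrix is nonsingular both factors can be taken nonsingular. Writing $A=P_1P_2$ with $P_1,P_2$ symmetric and invertible, one checks directly that
\[
\begin{bmatrix} A & 0 \\ 0 & A^{-T}\end{bmatrix}
=\begin{bmatrix} I & 0 \\ P_1^{-1}P_2^{-1}P_1^{-1}-P_1^{-1} & I \end{bmatrix}
\begin{bmatrix} I & P_1 \\ 0 & I \end{bmatrix}
\begin{bmatrix} I & 0 \\ P_2-P_1^{-1} & I \end{bmatrix}
\begin{bmatrix} I & -P_2^{-1} \\ 0 & I \end{bmatrix},
\]
and every off-diagonal block here is symmetric because $P_1,P_2$ are (note $(P_1^{-1}P_2^{-1}P_1^{-1})^T=P_1^{-1}P_2^{-1}P_1^{-1}$). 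Sandwiching this between the outer $L$ and $U$ from the center LDU factorization~\eqref{eq:center_LDU} and merging the adjacent lower (resp.\ upper) factors yields the claimed $H\in\mathcal{L}_4$ in the order lower--upper--lower--upper. So your plan becomes a proof once you invoke the two-symmetric-factor decomposition of $A$; without it the argument stalls precisely where you flagged.
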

\begin{proof}
According to Property \ref{pro:LDU} (equation (\ref{eq:center_LDU})) and the fact that any square matrix can be factored as the product of two symmetric square matrices \cite{taussky1972role}, we know there exist symmetric $R,W\in F^{d\times d}$ and symmetric nonsingular $P_1,P_2\in F^{d\times d}$ such that 
\begin{equation*}
H=\begin{bmatrix} I & 0 \\ R & I \end{bmatrix}\begin{bmatrix} (P_1P_2) & 0 \\ 0 & (P_1P_2)^{-T} \end{bmatrix}\begin{bmatrix} I & W \\ 0 & I \end{bmatrix},
\end{equation*}
then we can readily check that 
\begin{equation*}
H=\begin{bmatrix} I & 0 \\ R+P_1^{-1}P_2^{-1}P_1^{-1}-P_1^{-1} & I \end{bmatrix}\begin{bmatrix} I & P_1 \\ 0 & I \end{bmatrix}\begin{bmatrix} I & 0 \\ P_2-P_1^{-1} & I \end{bmatrix}\begin{bmatrix} I & W-P_2^{-1} \\ 0 & I \end{bmatrix}\in \mathcal{L}_4.
\end{equation*}
\end{proof}

\begin{remark}
Since the determinant of conjugate symplectic matrix is not necessarily 1, so the above lemma does not hold for complex conjugate case. But we can prove that if $H=\begin{bmatrix}A & B \\ C & D\end{bmatrix}\in \mathcal{SP}^*$ and $A\in \C^{d\times d}$ is nonsingular and similarity to a real matrix (for example, $H$ is Hermitian positive definite conjugate symplectic), then $A$ can be written as the product of two Hermitian matrices \cite{frobenius1917sitzungsber}, therefore $H$ can also be unit-triangular factored as 4 factors.
\end{remark}

\begin{theorem}\label{thm:utf}
For any symplectic matrix $H\in F^{2d\times 2d}$, there exist $\delta_1,\cdots,\delta_d\in\{0,1\}$ and symmetric $S,T,U,V\in F^{d\times d}$ such that
\begin{equation}\label{eq:utf}
H=\begin{bmatrix} I & {\rm diag}(\delta_1,\cdots,\delta_d) \\ 0 & I \end{bmatrix}\begin{bmatrix} I & 0 \\ S & I \end{bmatrix}\begin{bmatrix} I & T \\ 0 & I \end{bmatrix}\begin{bmatrix} I & 0 \\ U & I \end{bmatrix}\begin{bmatrix} I & V \\ 0 & I \end{bmatrix}\in \mathcal{L}_5.
\end{equation}
Furthermore, $\mathcal{L}_4\subsetneq \mathcal{SP}=\mathcal{L}_5$.
\end{theorem}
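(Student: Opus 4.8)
The plan is to prove the two assertions in turn: the explicit five–factor form (\ref{eq:utf}), which yields $\mathcal{SP}\subseteq\mathcal{L}_5$ and hence $\mathcal{SP}=\mathcal{L}_5$, and then the strict inclusion $\mathcal{L}_4\subsetneq\mathcal{SP}$, i.e. optimality.

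The first assertion is immediate from the tools already in place. Given $H\in\mathcal{SP}$, Theorem \ref{thm:nonsin_fac} produces $\delta_1,\dots,\delta_d\in\{0,1\}$ and a symplectic $\hat H$ with nonsingular left upper block such that $H=\begin{bmatrix}I&{\rm diag}(\delta_1,\dots,\delta_d)\\0&I\end{bmatrix}\hat H$; feeding $\hat H$ into Lemma \ref{lem:nonsin_utf} gives symmetric $S,T,U,V$ realizing $\hat H$ as the four–factor $LULU$ product appearing in (\ref{eq:utf}), and substituting yields (\ref{eq:utf}). Since the right–hand side of (\ref{eq:utf}) visibly lies in $\mathcal{L}_5$ and $\mathcal{L}_5\subseteq\mathcal{SP}$ is trivial, $\mathcal{SP}=\mathcal{L}_5$.

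For optimality I would first normalize $\mathcal{L}_4$. Writing $U(S)=\begin{bmatrix}I&S\\0&I\end{bmatrix}$ and $L(S)=\begin{bmatrix}I&0\\S&I\end{bmatrix}$ for symmetric $S$, any product of four elements of $\mathcal{T}$ collapses, by merging adjacent factors of the same type (using $U(S)U(S')=U(S+S')$ and $L(S)L(S')=L(S+S')$), to an alternating product, so $\mathcal{L}_4=\{U(S_1)L(S_2)U(S_3)L(S_4)\}\cup\{L(S_1)U(S_2)L(S_3)U(S_4)\}$ over symmetric $S_i$ (taking some $S_i=0$ absorbs all shorter products). The crucial point is a symmetry constraint obtained by a one–line block multiplication: the $(1,2)$ block of any $L(S_2)U(S_3)L(S_4)$ equals $S_3$, hence is symmetric, and likewise the $(2,1)$ block of any $U(S_2)L(S_3)U(S_4)$ equals $S_3$ and is symmetric. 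Consequently, if $H=\begin{bmatrix}A&B\\C&D\end{bmatrix}\in\mathcal{L}_4$, then after left–multiplying by $U(-S_1)$ in the first family we get that $B-S_1D$ is symmetric for some symmetric $S_1$, while in the second family $C-S_1A$ is symmetric for some symmetric $S_1$.

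It then remains to exhibit a symplectic matrix violating both conditions, and for $d\ge 2$ this is easy: choose any nonsingular non-symmetric $B_0\in F^{d\times d}$ (such matrices exist precisely when $d\ge 2$, e.g. the matrix with $1$'s on the diagonal and a single extra $1$ in the $(1,2)$ entry) and set $G=\begin{bmatrix}0&B_0\\-B_0^{-T}&0\end{bmatrix}$, which one checks directly is symplectic (equivalently $G=J\begin{bmatrix}B_0^{-T}&0\\0&B_0\end{bmatrix}$, a product of two symplectic matrices by Property \ref{pro:smat_cond}). For $G$ we have $A=D=0$, so the two necessary conditions collapse to ``$B_0$ is symmetric'' and ``$B_0^{-T}$ is symmetric'', both of which fail since $B_0^{-T}$ is symmetric iff $B_0$ is. Hence $G\notin\mathcal{L}_4$, giving $\mathcal{L}_4\subsetneq\mathcal{SP}=\mathcal{L}_5$. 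The only step requiring genuine care is the normalization of $\mathcal{L}_4$ to the two alternating families (making sure degenerate and shorter products are really absorbed and that the $U(-S_1)$/$L(-S_1)$ cancellation is justified); the symmetry observation and the verification for $G$ are both short computations.
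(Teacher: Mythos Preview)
Your proposal is correct and follows essentially the same route as the paper. The existence part is identical (Theorem~\ref{thm:nonsin_fac} followed by Lemma~\ref{lem:nonsin_utf}), and for optimality you use the very same counterexample $\begin{bmatrix}0&B_0\\-B_0^{-T}&0\end{bmatrix}$ with $B_0$ nonsingular and non-symmetric; the paper simply computes the $(1,1)$ and $(1,2)$ blocks of the $LULU$ product directly to force $T=B_0$, whereas you package the same computation as the cleaner necessary condition ``$B-S_1D$ (resp.\ $C-S_1A$) symmetric for some symmetric $S_1$'', which lets you handle both alternating families uniformly instead of dismissing the second case as analogous.
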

\begin{proof}
By Theorem \ref{thm:nonsin_fac} and Lemma \ref{lem:nonsin_utf}, we obtain equation (\ref{eq:utf}) and consequently $\mathcal{SP}=\mathcal{L}_5$. On the other hand, choose an asymmetric and nonsingular matrix $Q\in F^{d\times d}$. If there exist symmetric $S,T,U,V$ such that
\begin{equation*}
\begin{bmatrix} I & 0 \\ S & I \end{bmatrix}\begin{bmatrix} I & T \\ 0 & I \end{bmatrix}\begin{bmatrix} I & 0 \\ U & I \end{bmatrix}\begin{bmatrix} I & V \\ 0 & I \end{bmatrix}=\begin{bmatrix} 0 & Q \\ -Q^{-T} & 0 \end{bmatrix},
\end{equation*}
then we have
\begin{equation*}
    I+TU=0,\quad V+TUV+T=Q,
\end{equation*}
which implies $T=Q$ leading to contradiction. For the case when the most left-hand side factor is upper triangular, we will also deduce the same contradiction.
\end{proof}
Thus, any symplectic matrix can be factored into no more than $5$ unit triangular symplectic matrices, and 5 is optimal.
\begin{corollary}
$\mathcal{L}_4$ is dense in $\mathcal{L}_5$ in Euclidean topology for $F=\R,\C$.
\end{corollary}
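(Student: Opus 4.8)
The plan is to exhibit, around an arbitrary symplectic matrix, an explicit one-parameter family of matrices lying in $\mathcal{L}_4$ that converges to it. First I would isolate the subset
\[
\mathcal{U}=\left\{H=\begin{bmatrix}A & B \\ C & D\end{bmatrix}\in\mathcal{SP}:\ A\in F^{d\times d}\ \text{nonsingular}\right\},
\]
which is open in $\mathcal{SP}$ (it is the non-vanishing locus of the continuous map $H\mapsto\det A$) and, by Lemma \ref{lem:nonsin_utf}, satisfies $\mathcal{U}\subset\mathcal{L}_4$. Since $\mathcal{SP}=\mathcal{L}_5$ by Theorem \ref{thm:utf}, it is enough to show that $\mathcal{U}$ is dense in $\mathcal{SP}$; then $\mathcal{L}_4\supset\mathcal{U}$ is a fortiori dense in $\mathcal{L}_5$, and the corollary follows.

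For the density of $\mathcal{U}$, fix $H\in\mathcal{SP}$. By Theorem \ref{thm:nonsin_thm} there is a symmetric $S\in F^{d\times d}$ such that
\[
H_\lambda\coloneqq\begin{bmatrix} I & -\lambda S \\ 0 & I \end{bmatrix}H=\begin{bmatrix} I & \lambda S \\ 0 & I \end{bmatrix}^{-1}H
\]
has nonsingular left upper block for every $\lambda\neq 0$; moreover $H_\lambda$, being a product of two symplectic matrices, is itself symplectic, so $H_\lambda\in\mathcal{U}\subset\mathcal{L}_4$ for all $\lambda\neq 0$. Letting $\lambda\to 0$, the prefactor tends to $I$, hence $H_\lambda\to H$ in the Euclidean topology, and $H$ lies in the closure of $\mathcal{L}_4$. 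Since $H$ was arbitrary, $\mathcal{L}_5=\mathcal{SP}\subseteq\overline{\mathcal{L}_4}$, which is the assertion. The case $F=\C$ is identical, since Theorem \ref{thm:nonsin_thm} and Lemma \ref{lem:nonsin_utf} hold over an arbitrary field and "Euclidean topology" is just the standard topology on $\C^{2d\times 2d}\cong\R^{8d^2}$.

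There is essentially no hard step: the argument rests on the observation that Theorem \ref{thm:nonsin_thm} already supplies a \emph{continuous} family $\{H_\lambda\}_{\lambda\neq 0}$ (rather than only the single matrix with $\delta_i\in\{0,1\}$ of Theorem \ref{thm:nonsin_fac}), and that Lemma \ref{lem:nonsin_utf} upgrades nonsingularity of the leading block to membership in $\mathcal{L}_4$. The one point deserving a line of care is that the approximants must be genuine elements of $\mathcal{L}_4$, not merely of $\mathcal{SP}$, which is precisely what Lemma \ref{lem:nonsin_utf} guarantees. If one preferred to avoid invoking Theorem \ref{thm:nonsin_thm}, an alternative is to note that $\mathcal{SP}$ is an irreducible (indeed connected) real or complex algebraic variety, so the complement $\mathcal{U}$ of the proper subvariety $\{\det A=0\}$ is automatically dense; but the deformation argument above is more elementary and self-contained.
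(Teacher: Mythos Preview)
Your argument is correct and is essentially identical to the paper's own proof: both invoke Theorem~\ref{thm:nonsin_thm} to produce the one-parameter family $H_\lambda=\begin{bmatrix} I & -\lambda S \\ 0 & I\end{bmatrix}H$ with nonsingular upper-left block for $\lambda\neq 0$, appeal (explicitly in your case, implicitly in the paper) to Lemma~\ref{lem:nonsin_utf} to place $H_\lambda\in\mathcal{L}_4$, and then let $\lambda\to 0$. Your extra framing via the open set $\mathcal{U}$ and the remark on irreducibility are welcome elaborations but not new ingredients.
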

\begin{proof}
Theorem \ref{thm:nonsin_thm} shows that, for any $H\in \mathcal{SP}=\mathcal{L}_5$, there exists a symmetric $S$ such that
\begin{equation*}
    H=\begin{bmatrix} I & \lambda S \\ 0 & I \end{bmatrix}\begin{bmatrix} P_{\lambda} & \star \\ \star & \star \end{bmatrix}
\end{equation*}
holds with a nonsingular $P_{\lambda}$ for all $\lambda\neq0$. Therefore
\begin{equation*}
    H=\lim_{\lambda\rightarrow 0}\begin{bmatrix} I & -\lambda S \\ 0 & I \end{bmatrix}H=\lim_{\lambda\rightarrow 0}\begin{bmatrix} P_{\lambda} & \star \\ \star & \star \end{bmatrix}\in \overline{\mathcal{L}_4},
\end{equation*}
which means $\mathcal{SP}\subset \overline{\mathcal{L}_4}$, consequently $\mathcal{L}_5=\overline{\mathcal{L}_4}$.
\end{proof}
\begin{remark}
For above corollary, we can also regard the symplectic group $\mathcal{SP}$ as an affine algebraic subvariety of $F^{2d\times 2d}$, then $\mathcal{L}_4$ is a Zaraski open subset of $\mathcal{SP}$, where $F$ is a general field. Especially, $\mathcal{L}_4$ is dense in $\mathcal{SP}$ in Euclidean topology if $F$ is chosen as $\R$ or $\C$. 
\end{remark}
We summarize the algorithm of unit triangular factorization as in Algorithm \ref{alg:utf}. The $D$ in step 3 of Algorithm \ref{alg:utf} can also be obtained by Theorem \ref{thm:nonsin_thm}.
\begin{algorithm}
\caption{Unit triangular factorization}
\label{alg:utf}
\begin{algorithmic}
\REQUIRE{$H\in \mathcal{SP}$}
\ENSURE{Symmetric $D,S,T,U,V$ such that $H=\begin{bmatrix} I & D \\ 0 & I \end{bmatrix}\begin{bmatrix} I & 0 \\ S & I \end{bmatrix}\begin{bmatrix} I & T \\ 0 & I \end{bmatrix}\begin{bmatrix} I & 0 \\ U & I \end{bmatrix}\begin{bmatrix} I & V \\ 0 & I \end{bmatrix}$}
\STATE{1. Given $H=\begin{bmatrix}A&\star \\ \star&\star\end{bmatrix}\in \mathcal{SP}$ where $A\in F^{d\times d}$}
\STATE{2. Find the maximal linearly independent rows of $A$ and denote the index set by $\Gamma$}
\STATE{3. Set $D:={\rm diag}(\delta_1,\cdots,\delta_d)$ where $\delta_i=0$ if $i\in\Gamma$ otherwise it is 1}
\STATE{4. Set $\begin{bmatrix} A_{1} & B_{1} \\ A_{2} & B_{2} \end{bmatrix}:=\begin{bmatrix} I & -D \\ 0 & I \end{bmatrix}H$}
\STATE{5. Compute the factorization $A_1=P_1P_2$ where $P_1,P_2$ are symmetric}
\STATE{6. Set $S:=A_{2}A_{1}^{-1}+P_1^{-1}A_1^{-1}-P_1^{-1}$}
\STATE{7. Set $T:=P_1$}
\STATE{8. Set $U:=P_2-P_1^{-1}$}
\STATE{9. Set $V:=A_{1}^{-1}B_1-P_2^{-1}$}
\RETURN $D,S,T,U,V$
\end{algorithmic}
\end{algorithm}

\subsection{Positive definite symplectic matrix}
Denote
\begin{equation*}
\begin{split}
&\mathcal{SPP}=\{H\in\R^{2d\times 2d}|H\ {\rm symmetric,\ positive\ definite\ and\ symplectic}\},\\
&\mathcal{L}_{n} ^2=\{L^TL|L\in\mathcal{L}_{n}\ {\rm with}\ F=\R\},\\
&\mathcal{SPP}^*=\{H\in\C^{2d\times 2d}|H\ {\rm Hermitian,\ positive\ definite\ and\ conjugate\ symplectic}\},\\
&\mathcal{L}_{n}^{*2}= \{L^*L|L\in\mathcal{L}_{n}^{*}\},\\
\end{split}
\end{equation*}
where $\mathcal{L}_n^*$ is the set of conjugate symplectic matrices which are the product of $n$ unit triangular conjugate symplectic matrices. 
\begin{theorem} \label{thm:sppfac_thm}
$\mathcal{SPP}=\mathcal{L}_{3}^2$, $\mathcal{SPP}^*=\mathcal{L}_{3}^{*2}$.
\end{theorem}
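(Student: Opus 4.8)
The plan is to establish the two inclusions $\mathcal{L}_3^2\subseteq\mathcal{SPP}$ and $\mathcal{SPP}\subseteq\mathcal{L}_3^2$ separately, and analogously in the conjugate case. The first is immediate: any $L\in\mathcal{L}_3\subseteq\mathcal{SP}$ is nonsingular, so $L^TL$ is symmetric positive definite, and since $L^TJL=J$ implies $LJL^T=J$ (from $L=-JL^{-T}J$) we get $(L^TL)^TJ(L^TL)=L^T(LJL^T)L=L^TJL=J$, hence $L^TL\in\mathcal{SPP}$; replacing $T$ by $*$ gives $\mathcal{L}_3^{*2}\subseteq\mathcal{SPP}^*$. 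For the reverse inclusion I would proceed in three steps.

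First I would prove an $\mathcal{L}_3$-membership criterion: if $M=\begin{bmatrix}\alpha&\beta\\\gamma&\delta\end{bmatrix}\in\mathcal{SP}$ with $\gamma$ symmetric and nonsingular, then $M\in\mathcal{L}_3$, namely $M=\begin{bmatrix}I&(\alpha-I)\gamma^{-1}\\0&I\end{bmatrix}\begin{bmatrix}I&0\\\gamma&I\end{bmatrix}\begin{bmatrix}I&\gamma^{-1}(\delta-I)\\0&I\end{bmatrix}$. Checking that the two off-diagonal blocks here are symmetric and that the remaining block equation holds uses only the symplectic relations $M^TJM=MJM^T=J$, i.e. Property \ref{pro:smat_pro} applied to $M$ and to $M^T$ (concretely $\gamma\alpha\gamma^{-1}=\alpha^T$, $\gamma\delta^T=\delta\gamma$, $\alpha^T\delta-\gamma\beta=I$). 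Exchanging the roles of upper and lower factors gives the same conclusion when $\beta$ is symmetric and nonsingular, and the conjugate-symplectic version is verbatim with $T$ replaced by $*$.

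Next, given $H=\begin{bmatrix}A&B\\B^T&D\end{bmatrix}\in\mathcal{SPP}$, its leading block $A$ is symmetric positive definite, hence nonsingular, so the center $LDU$ factorization of Property \ref{pro:LDU} applies; combined with $H=H^T$ and Property \ref{pro:smat_pro}(i) (which forces $S:=A^{-1}B=B^TA^{-1}$ to be symmetric), it gives $H=\begin{bmatrix}I&0\\S&I\end{bmatrix}\begin{bmatrix}A&0\\0&A^{-1}\end{bmatrix}\begin{bmatrix}I&S\\0&I\end{bmatrix}$. Writing $A=A^{1/2}A^{1/2}$ with $A^{1/2}$ symmetric positive definite and using $\begin{bmatrix}I&0\\S&I\end{bmatrix}^T=\begin{bmatrix}I&S\\0&I\end{bmatrix}$ together with the symmetry of $\begin{bmatrix}A^{1/2}&0\\0&A^{-1/2}\end{bmatrix}$, this becomes $H=L_1^TL_1$ with the block upper-triangular symplectic matrix $L_1=\begin{bmatrix}A^{1/2}&0\\0&A^{-1/2}\end{bmatrix}\begin{bmatrix}I&S\\0&I\end{bmatrix}=\begin{bmatrix}A^{1/2}&A^{1/2}S\\0&A^{-1/2}\end{bmatrix}$.

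Finally I would correct $L_1$ into $\mathcal{L}_3$. Since $L_1$ itself need not lie in $\mathcal{L}_3$ (when $S=0$ it is a nontrivial block-diagonal symplectic matrix, which one checks is not in $\mathcal{L}_3$), I use the freedom $H=L_1^TL_1=(OL_1)^T(OL_1)$ over $O$ in the orthogonal symplectic group $\left\{\begin{bmatrix}U&V\\-V&U\end{bmatrix}: U+iV\in U(d)\right\}$. The bottom-left block of $OL_1$ is $-VA^{1/2}$, so taking $V=\varepsilon A^{-1/2}$ with $\varepsilon>0$ small makes it $-\varepsilon I$, symmetric and nonsingular; it then remains to find $U$ with $U^TU=I-\varepsilon^2A^{-1}$ (positive definite for small $\varepsilon$) and $U^TA^{-1/2}$ symmetric. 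Such $U$ exists because the requirement reduces to finding an orthogonal $R$ with $C^{1/2}R^TA^{-1/2}$ symmetric (with $C=I-\varepsilon^2A^{-1}$), which follows from the solvability of $S_0DS_0=E$ in symmetric $S_0$ for positive definite $D,E$, e.g. $S_0=D^{-1/2}(D^{1/2}ED^{1/2})^{1/2}D^{-1/2}$. The first step then gives $OL_1\in\mathcal{L}_3$ and $(OL_1)^T(OL_1)=L_1^TL_1=H$, so $H\in\mathcal{L}_3^2$; the conjugate statement $\mathcal{SPP}^*=\mathcal{L}_3^{*2}$ follows by the same argument with Hermitian square roots and unitary conjugate-symplectic corrections. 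I expect the main obstacle to be this last step: producing an orthogonal symplectic factor that simultaneously keeps $OL_1$ symplectic, preserves $L_1^TL_1$, and installs a symmetric nonsingular bottom-left block — the block-triangular normal form of the previous step is exactly what makes the relevant block depend on a single free parameter and hence makes the correction feasible.
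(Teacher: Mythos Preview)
Your argument is correct and takes a genuinely different route from the paper. Both proofs share the opening move: the LDU factorization of Property~\ref{pro:LDU} together with the symmetry of $H$ gives
\[
H=\begin{bmatrix}I&0\\S&I\end{bmatrix}\begin{bmatrix}P&0\\0&P^{-1}\end{bmatrix}\begin{bmatrix}I&S\\0&I\end{bmatrix}
\]
with $P$ symmetric positive definite. From here the paper proceeds by a direct algebraic manipulation: it shifts $S$ to $S-I$, inserts factors $\begin{bmatrix}I&-T\\0&I\end{bmatrix}$ and $\begin{bmatrix}I&0\\-T&I\end{bmatrix}$ symmetrically on each side, and chooses $T=\bigl(-P+\sqrt{P+P^{-1}-I}\bigr)(P+P^{-1})^{-1}$ so as to solve the quadratic matrix equation $P+TP+PT+T(P+P^{-1})T=I$; this forces the central $2\times2$ block matrix to have identity diagonal blocks, whence it splits as the product of two unit triangular factors and one reads off an explicit $L\in\mathcal{L}_3$ with $H=L^TL$. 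Your approach instead writes $H=L_1^TL_1$ with $L_1$ block upper triangular, isolates a reusable $\mathcal{L}_3$-membership criterion (any symplectic matrix whose lower-left block is symmetric and nonsingular lies in $\mathcal{L}_3$), and then exploits the orthogonal-symplectic ambiguity $L_1\mapsto OL_1$ to arrange that criterion, reducing the construction of a suitable $O$ to the solvability of $S_0AS_0=C$ in symmetric $S_0$ for positive definite $A,C$. The paper's route is more explicit and feeds directly into a closed-form algorithm; your route is more modular, the $\mathcal{L}_3$ criterion is of independent interest, and the auxiliary matrix equation you end up solving is structurally simpler than the paper's quadratic.
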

\begin{proof}
If $H$ is symmetric positive definite, then all  principal submatrices of $H$ are symmetric positive definite. Denote that $H=\begin{bmatrix} H_{1} & H_{2} \\ H_{3} & H_{4} \end{bmatrix}\in \mathcal{SPP}$, here $H_1$ is positive definite, thus nonsingular. So that in the process of decomposition, instead of going through Theorem \ref{thm:nonsin_thm}/\ref{thm:nonsin_fac}, we just apply the LDU factorization to H, and  get
\begin{equation*}
    H=\begin{bmatrix} I & 0 \\ H_3H_1^{-1} & I \end{bmatrix}\begin{bmatrix} H_1 & 0 \\ 0 & H_1^{-T} \end{bmatrix}\begin{bmatrix} I & H_1^{-1}H_2 \\ 0 & I \end{bmatrix},
\end{equation*}
where $H_3H_1^{-1}$ and $H_1^{-1}H_2$ are symmetric matrices. Moreover, because of the symmetry of $H$, $H_3H_1^{-1}=(H_1^{-1}H_2)^T=H_1^{-1}H_2$. Therefore, every positive definite symplectic matrix can be written as
\begin{equation*}
    H=\begin{bmatrix} I & 0 \\ S & I \end{bmatrix}\begin{bmatrix} P & 0 \\ 0 & P^{-T} \end{bmatrix}\begin{bmatrix} I & S \\ 0 & I \end{bmatrix}
\end{equation*}
with $P$ symmetric positive definite. Then 
\begin{equation*}
\begin{split}
    &\begin{bmatrix} I & 0 \\ S & I \end{bmatrix}\begin{bmatrix} P & 0 \\ 0 & P^{-T} \end{bmatrix}\begin{bmatrix} I & S \\ 0 & I \end{bmatrix}\\
    =&\begin{bmatrix} I & 0 \\ S-I & I \end{bmatrix}\begin{bmatrix} P & P \\ P & P+P^{-1} \end{bmatrix}\begin{bmatrix} I & S-I \\ 0 & I \end{bmatrix}\\
    =&\begin{bmatrix} I & 0 \\ S-I & I \end{bmatrix}\begin{bmatrix} I & -T \\ 0 & I \end{bmatrix}\begin{bmatrix} P+TP+PT+T(P+P^{-1})T & P+T(P+P^{-1}) \\ P+(P+P^{-1})T & P+P^{-1} \end{bmatrix}\begin{bmatrix} I & 0 \\ -T & I \end{bmatrix}\begin{bmatrix} I & S-I \\ 0 & I \end{bmatrix}\\
    =&\begin{bmatrix} I & 0 \\ S-I & I \end{bmatrix}\begin{bmatrix} I & -T \\ 0 & I \end{bmatrix}\begin{bmatrix} I & P+T(P+P^{-1}) \\ P+(P+P^{-1})T & P+P^{-1} \end{bmatrix}\begin{bmatrix} I & 0 \\ -T & I \end{bmatrix}\begin{bmatrix} I & S-I \\ 0 & I \end{bmatrix}\\
    =&\begin{bmatrix} I & 0 \\ S-I & I \end{bmatrix}\begin{bmatrix} I & -T \\ 0 & I \end{bmatrix}\begin{bmatrix} I & 0 \\ P+(P+P^{-1})T & I \end{bmatrix}\begin{bmatrix} I & P+T(P+P^{-1}) \\ 0 & I \end{bmatrix}\begin{bmatrix} I & 0 \\ -T & I \end{bmatrix}\begin{bmatrix} I & S-I \\ 0 & I \end{bmatrix},\\
\end{split}
\end{equation*}
where $T=(-P+\sqrt{P+P^{-1}-I})(P+P^{-1})^{-1}$, which is in fact obtained by solving the quadratic equation
\begin{equation*}
    P+TP+PT+T(P+P^{-1})T=I.
\end{equation*}
So that
\begin{equation}\label{eq:spp}
    H=L^TL,\quad L=\begin{bmatrix} I & P+T(P+P^{-1}) \\ 0 & I \end{bmatrix}\begin{bmatrix} I & 0 \\ -T & I \end{bmatrix}\begin{bmatrix} I & S-I \\ 0 & I \end{bmatrix}.
\end{equation}
Note that both $P+P^{-1}$ and $P+P^{-1}-I$ are positive definite, and any positive definite matrix $M$ has a unique positive definite square root $\sqrt{M}$.

The complex conjugate case is the same.
\end{proof}

\begin{remark}
We actually prove that any positive definite symplectic matrix can be factored as (\ref{eq:spp}) where $L$ has the shape ``upper-lower-upper''. The other case ``lower-upper-lower'' can be obtained by considering nonsingular $H_4$.
\end{remark}

\begin{theorem}
$\mathcal{SPP}\neq\mathcal{L}_{2}^2$, $\mathcal{SPP}^*\neq\mathcal{L}_{2}^{*2}$.
\end{theorem}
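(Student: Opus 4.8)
The plan is to produce one matrix in $\mathcal{SPP}$ that cannot lie in $\mathcal{L}_2^2$, and similarly in the conjugate case. First note that $\mathcal{L}_2^2\subseteq\mathcal{SPP}$ holds automatically: if $L\in\mathcal{L}_2\subseteq\mathcal{SP}$ then $L^T$ is symplectic as well, so $L^TL$ is symplectic, and since $L$ is nonsingular $L^TL$ is symmetric positive definite; hence $L^TL\in\mathcal{SPP}$. So it suffices to exhibit $H\in\mathcal{SPP}$ with $H\notin\mathcal{L}_2^2$.

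The core of the argument is a constraint on the diagonal blocks of elements of $\mathcal{L}_2^2$. Every $L\in\mathcal{L}_2$ is a product $H_2H_1$ with $H_1,H_2\in\mathcal{T}$, so there are four cases according to whether each $H_i$ is upper or lower triangular: the upper-upper and lower-lower products are again single unit triangular matrices, while the two mixed products have the forms
\begin{equation*}
\begin{bmatrix} I+ST & S \\ T & I \end{bmatrix}\quad\text{and}\quad\begin{bmatrix} I & T \\ S & I+ST \end{bmatrix},\qquad S^T=S,\ T^T=T.
\end{equation*}
I would compute $L^TL$ in each of the four cases and check that at least one of the two diagonal $d\times d$ blocks of $L^TL$ always has the form $I+M^TM$ for some real $M$: that block equals $I$ in the upper-upper and lower-lower cases, and equals $I+S^TS$ in each mixed case. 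Consequently, writing a generic element of $\mathcal{L}_2^2$ in $2\times2$ block form $\left[\begin{smallmatrix} G_1 & G_2 \\ G_2^T & G_4 \end{smallmatrix}\right]$, we always have $G_1\succeq I$ or $G_4\succeq I$, i.e., at least one of its diagonal blocks has all eigenvalues $\ge 1$.

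Given this, the counterexample is immediate. For $d\ge 2$ set $D=\mathrm{diag}(2,\tfrac12,1,\dots,1)\in\R^{d\times d}$ and $H=\left[\begin{smallmatrix} D & 0 \\ 0 & D^{-1} \end{smallmatrix}\right]$. By Property \ref{pro:smat_cond}(iii) $H$ is symplectic, and it is plainly symmetric and positive definite, so $H\in\mathcal{SPP}$; but its diagonal blocks $D$ and $D^{-1}=\mathrm{diag}(\tfrac12,2,1,\dots,1)$ each have $\tfrac12$ as an eigenvalue, so neither is $\succeq I$, contradicting the previous paragraph. Hence $H\notin\mathcal{L}_2^2$, so $\mathcal{SPP}\ne\mathcal{L}_2^2$. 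The conjugate statement is proved verbatim: the four types of $L\in\mathcal{L}_2^*$ are built from Hermitian blocks, so each $L^*L$ has a diagonal block of the form $I+M^*M\succeq I$, and the same real $H$ (which is Hermitian positive definite and conjugate symplectic) yields the contradiction, giving $\mathcal{SPP}^*\ne\mathcal{L}_2^{*2}$.

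I do not expect a genuine obstacle; the only delicate point is to make the case analysis exhaustive and, in particular, to verify that the degenerate upper-upper and lower-lower cases (where $L$ is itself unit triangular, so $L\in\mathcal{L}_1$) still conform to the pattern that one diagonal block of $L^TL$ has the form $I+M^TM$ — and, in the conjugate case, that $M^*M\succeq 0$ for the relevant Hermitian blocks. It is also worth recording that the argument genuinely needs $d\ge 2$: when $d=1$, every symmetric positive definite symplectic matrix already has a diagonal entry $\ge 1$ (since its determinant equals $1$), so no such matrix can serve as a witness, consistent with the paper's standing assumption $d\ge 2$.
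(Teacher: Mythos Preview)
Your argument is correct and in fact more robust than the paper's. The structural observation—that for every $L\in\mathcal{L}_2$ at least one diagonal $d\times d$ block of $L^TL$ has the form $I+M^TM\succeq I$—is exactly the right invariant, and your witness $H=\left[\begin{smallmatrix}D&0\\0&D^{-1}\end{smallmatrix}\right]$ with $D=\mathrm{diag}(2,\tfrac12,1,\dots,1)$ violates it on \emph{both} diagonal blocks simultaneously, so it cannot lie in $\mathcal{L}_2^2$. The conjugate case goes through identically.

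The paper argues more directly with the candidate $H=\left[\begin{smallmatrix}\frac12 I&0\\0&2I\end{smallmatrix}\right]$: for $L$ of the shape lower$\cdot$upper it reads off the $(1,1)$-block equation $I+T^2=\tfrac12 I$, impossible for real symmetric $T$, and then asserts that the remaining case ``will also deduce the same contradiction.'' But for $L$ of the shape upper$\cdot$lower the analogous constraint falls on the $(2,2)$-block and becomes $I+T^2=2I$, which \emph{is} solvable—indeed $T=I$, $S=-\tfrac12 I$ give $L=\left[\begin{smallmatrix}I&I\\0&I\end{smallmatrix}\right]\left[\begin{smallmatrix}I&0\\-\frac12 I&I\end{smallmatrix}\right]=\left[\begin{smallmatrix}\frac12 I&I\\-\frac12 I&I\end{smallmatrix}\right]$ with $L^TL=\left[\begin{smallmatrix}\frac12 I&0\\0&2I\end{smallmatrix}\right]$. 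So the paper's chosen matrix actually lies in $\mathcal{L}_2^2$, and one genuinely needs a witness like yours in which \emph{neither} diagonal block is $\succeq I$. This is also precisely why the construction requires $d\ge2$, as you observe.
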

\begin{proof}
Suppose that there exist symmetric $S,T$ such that
\begin{equation*}
\begin{bmatrix} I & 0 \\ S & I \end{bmatrix}\begin{bmatrix} I & T \\ 0 & I \end{bmatrix}\begin{bmatrix} I & 0 \\ T & I \end{bmatrix}\begin{bmatrix} I & S \\ 0 & I \end{bmatrix}=\begin{bmatrix} \frac{1}{2}I & 0 \\ 0 & 2I \end{bmatrix},
\end{equation*}
then we have $I+T^2=\frac{1}{2}I$, thus $T^2=-\frac{1}{2}I$ which leads to contradiction. For the case when the most left-hand side factor is upper triangular, we will also deduce the same contradiction.

The complex conjugate case is the same.
\end{proof}
We summarize the factorization algorithm for $\mathcal{SPP}$ as in Algorithm \ref{alg:spp}, and the complex conjugate case is the same ($*$ instead of $T$) and we do not repeat again.
\begin{algorithm}
\caption{Factorization of positive definite symplectic matrix}
\label{alg:spp}
\begin{algorithmic}
\REQUIRE{$H\in \mathcal{SPP}$}
\ENSURE{Symmetric $S,T,U$ such that $H=L^{T}L$,\quad $L=\begin{bmatrix} I & S \\ 0 & I \end{bmatrix}\begin{bmatrix} I & 0 \\ T & I \end{bmatrix}\begin{bmatrix} I & U \\ 0 & I \end{bmatrix}$}
\STATE{1. Given $H=\begin{bmatrix}P & A \\ \star & \star\end{bmatrix}\in \mathcal{SPP}$ where $P,A\in F^{d\times d}$}
\STATE{2. Set $S:=\sqrt{P+P^{-1}-I}$}
\STATE{3. Set $T:=(P-S)(P+P^{-1})^{-1}$}
\STATE{4. Set $U:=P^{-1}A-I$}
\RETURN $S,T,U$
\end{algorithmic}
\end{algorithm}

\subsection{Singular symplectic matrix}
In this subsection, we consider singular symplectic matrices over $K=\R,\C$ \cite{long1990maslov,long1991structure}.
\begin{definition} \label{def:singular_SP}
A symplectic matrix $H\in Sp(d,K)$ is singular if ${\rm det}(H-I)=0$.
\end{definition}
Denote all the singular symplectic matrices by
\begin{equation*}
\mathcal{SPS}=\{H\in Sp(d,K)|H\ {\rm singular} \},
\end{equation*}
then the result \cite[Corollary 4.4]{jin2020unit} can be updated as follows by applying the theorems and lemmas in this paper to its proof.
\begin{theorem}
The set of $2d$-by-$2d$ singular symplectic matrices is
\begin{equation*}
\begin{split}
\mathcal{SPS}=&\Bigg\{Q\begin{bmatrix}I&0\\\begin{bmatrix}0&0\\0&S_{6}\end{bmatrix}&I\end{bmatrix}\begin{bmatrix}I&S_{5}\\0&I\end{bmatrix}\cdots
\begin{bmatrix}I&0\\\begin{bmatrix}0&0\\0&S_{2}\end{bmatrix}&I\end{bmatrix}\begin{bmatrix}I&S_{1}\\0&I\end{bmatrix}Q^{-1} \\
&\Bigg|S_{2i-1}\in K^{d\times d}\ {\rm symmetric},\ S_{2i}\in K^{(d-1)\times(d-1)}\ {\rm symmetric},\ Q\ {\rm symplectic}\Bigg\}.
\end{split}
\end{equation*}
One can express $Q$ as the product of $5$ unit triangular symplectic matrices if needed.
\end{theorem}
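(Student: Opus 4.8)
The plan is to prove the displayed set equality by two inclusions and then read off the statement about $Q$ from Theorem~\ref{thm:utf}, following the scheme of \cite[Corollary 4.4]{jin2020unit} but feeding in the shorter factorizations of this paper. Write $e_1=(1,0,\dots,0)^T\in K^{2d}$, let $e_{d+1}$ be the $(d+1)$-st standard basis vector, and let $e_1^{(d)}=(1,0,\dots,0)^T\in K^{d}$. For the inclusion ``$\supseteq$'' I would observe that every factor on the right-hand side is symplectic by Property~\ref{pro:smat_cond}, so each such product $QMQ^{-1}$ lies in $Sp(d,K)$, and that every special factor fixes $e_1$: an upper unit triangular factor fixes any vector supported in the first $d$ coordinates, and a lower factor with block $\begin{bmatrix}0&0\\0&S_{2i}\end{bmatrix}$ fixes $e_1$ because $\begin{bmatrix}0&0\\0&S_{2i}\end{bmatrix}$ annihilates $e_1^{(d)}$. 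Hence $Me_1=e_1$, so $Qe_1$ is a $1$-eigenvector of $QMQ^{-1}$ and ${\rm det}(QMQ^{-1}-I)=0$, i.e.\ $QMQ^{-1}\in\mathcal{SPS}$.

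For ``$\subseteq$'', let $H\in\mathcal{SPS}$. Since ${\rm det}(H-I)=0$ there is a nonzero $v$ with $Hv=v$; any nonzero vector is the first member of some symplectic basis, so there is $Q\in Sp(d,K)$ with $Qe_1=v$, and then $\widetilde H:=Q^{-1}HQ$ satisfies $\widetilde He_1=e_1$. Multiplying $\widetilde H^{T}J\widetilde H=J$ on the right by $e_1$ and using $\widetilde He_1=e_1$ gives $\widetilde H^{T}(Je_1)=Je_1$, that is $e_{d+1}^{T}\widetilde H=e_{d+1}^{T}$; so if $\widetilde H=\begin{bmatrix}A&B\\C&D\end{bmatrix}$ then $A$ has first column $e_1^{(d)}$ and $C$ has zero first row and first column, $C=\begin{bmatrix}0&0\\0&C'\end{bmatrix}$. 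Next I would run the reduction of Theorem~\ref{thm:nonsin_fac} on $\widetilde H$: left multiplication by $\begin{bmatrix}I&-{\rm diag}(\delta_1,\dots,\delta_d)\\0&I\end{bmatrix}$ leaves the bottom block row of $\widetilde H$ unchanged, hence keeps $C$ in this special shape, and it does not alter the first column of $A$ because the first column of $C$ vanishes; thus $\widetilde H=\begin{bmatrix}I&{\rm diag}(\delta_1,\dots,\delta_d)\\0&I\end{bmatrix}\widetilde H'$ with $\widetilde H'\in Sp(d,K)$ whose left upper block $A'$ is nonsingular with first column $e_1^{(d)}$ and whose lower left block is still $C=\begin{bmatrix}0&0\\0&C'\end{bmatrix}$.

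Now I apply Lemma~\ref{lem:nonsin_utf} to $\widetilde H'$, and the point is that its two lower factors come out automatically in the restricted shape. With $A'=P_1P_2$ the Taussky factorization into symmetric nonsingular matrices used in the lemma, those lower blocks are $C(A')^{-1}+(P_1P_2P_1)^{-1}-P_1^{-1}$ and $P_2-P_1^{-1}$. Each is symmetric, and each annihilates $e_1^{(d)}$: since $(A')^{-1}e_1^{(d)}=e_1^{(d)}$ we get $C(A')^{-1}e_1^{(d)}=Ce_1^{(d)}=0$, $(P_1P_2P_1)^{-1}e_1^{(d)}=P_1^{-1}(A')^{-1}e_1^{(d)}=P_1^{-1}e_1^{(d)}$, and $P_2e_1^{(d)}=P_1^{-1}e_1^{(d)}$; and a symmetric matrix with zero first column also has zero first row. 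So both blocks have the form $\begin{bmatrix}0&0\\0&\star\end{bmatrix}$, and $\widetilde H$ is exhibited as a product (upper)(special lower)(upper)(special lower)(upper). Prepending the identity as one further (special) lower factor puts $\widetilde H$ into the ``LULULU'' pattern with six symmetric parameters of the stated sizes (one of which may be taken to be $0$), and conjugating by $Q$ gives the asserted description of $H$. For the last assertion, $Q\in Sp(d,K)$, so Theorem~\ref{thm:utf} expresses $Q$, and hence $Q^{-1}$, as a product of five unit triangular symplectic matrices, which may be substituted into the formula.

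The step I expect to be the main obstacle is controlling the shape of the lower factors: one must carry the fixed vector $e_1$ through both Theorem~\ref{thm:nonsin_fac} and Lemma~\ref{lem:nonsin_utf} and repeatedly use symmetry to upgrade ``zero first column'' to ``zero first row and column.'' The identity $e_{d+1}^{T}\widetilde H=e_{d+1}^{T}$ extracted at the start is the linchpin, since it is what forces $C$ into the special shape before any factorization is performed; the remainder is bookkeeping along the lines of \cite[Corollary 4.4]{jin2020unit} with Lemma~\ref{lem:nonsin_utf} and Theorem~\ref{thm:utf} in place of the older, longer decompositions.
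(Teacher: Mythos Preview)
Your proposal is correct and follows essentially the same route the paper indicates: it carries out the scheme of \cite[Corollary 4.4]{jin2020unit}---conjugate so that $e_1$ is fixed, use symplecticity to force the $(d{+}1)$-st row to be $e_{d+1}^T$ and hence $C=\begin{bmatrix}0&0\\0&C'\end{bmatrix}$, then run the factorization while tracking that this shape persists---while substituting Theorem~\ref{thm:nonsin_fac}, Lemma~\ref{lem:nonsin_utf}, and Theorem~\ref{thm:utf} for the longer decompositions of \cite{jin2020unit}. The key verification that both lower blocks $C(A')^{-1}+(P_1P_2P_1)^{-1}-P_1^{-1}$ and $P_2-P_1^{-1}$ annihilate $e_1^{(d)}$ (via $A'e_1^{(d)}=e_1^{(d)}$ and symmetry) is exactly the bookkeeping the paper alludes to when it says to ``apply the theorems and lemmas in this paper to its proof.''
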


\subsection{Unconstrained optimization}
An optimization problem with symplectic constraint is in the following form
\begin{equation}
\label{eq:ConOP}
\min\limits_{X \in \R^{2d \times 2d}}f(X),\quad s.t.\ X^TJX=J.
\end{equation}
There have been many works on optimization on the real symplectic group \cite{birtea2020optimization,fiori2011solving,wang2018riemannian}, in which one performs optimization by considering the gradients along the manifold. \cite{jin2020unit} has pointed out that the unit triangular factorization provides an approach to the symplectic optimization from a new perspective, i.e., optimizing in a higher dimensional unconstrained parameter space. We define the map $Pa$ for extracting the lower triangular parameters as $Pa(S)=(s_{11},s_{21},s_{22},s_{31},\cdots,s_{dd})$, where $S=(s_{ij})\in \R^{d\times d}$, $S^{T}=S$. Take symmetric $S_{1},S_{2},\cdots,S_{4}\in \R^{d\times d}$ and a vector $v\in\R^d$, then
\begin{equation*}
H(v,Pa(S_{1}),\cdots,Pa(S_{4}))=\begin{bmatrix} I & {\rm diag}(v) \\ 0 & I \end{bmatrix}\begin{bmatrix} I & 0 \\ S_{4} & I \end{bmatrix}\begin{bmatrix} I & S_{3} \\ 0 & I \end{bmatrix}\begin{bmatrix} I & 0 \\ S_{2} & I \end{bmatrix}\begin{bmatrix} I & S_{1} \\ 0 & I \end{bmatrix} \\
\end{equation*}
can represent any symplectic matrix when $v,Pa(S_{1}),\cdots,Pa(S_{4})$ vary.
Problem (\ref{eq:ConOP}) is equivalent to
\begin{equation*}
\min\limits_{\substack{v\in\R^d \\ Pa(S_{i})\in\R^{\frac{d(d+1)}{2}}}}f(H(v,Pa(S_{1}),\cdots,Pa(S_{4}))),
\end{equation*}
which is indeed an unconstrained optimization problem with parameters of $2d^2+3d$, that has the same quadratic term as the dimension of Lie group $Sp(d,\R)$, i.e., $2d^2+d$. The optimal unit triangular factorization significantly reduces the number of parameters compared to \cite{jin2020unit}. In fact, this method has been utilized in recent work \cite{jin2020sympnets}. In such a case, the unit triangular factorization-based optimization can be implemented directly within the deep learning framework and performs well, while the traditional Riemannian-steepest-descent approach faces challenges.

\section{Conclusions}
In this work, the optimal unit triangular factorization of symplectic matrices is given. We prove that any symplectic matrix can be factored into no more than 5 unit triangular symplectic matrices, moreover, 5 is the optimal number. We also show the corresponding improved conclusions for structured subsets of symplectic matrices, i.e., positive definite symplectic matrices and singular symplectic matrices. This factorization also provides an unconstrained optimization method on real symplectic group. 

\bibliographystyle{abbrv}
\bibliography{references}

\end{document}